\newcommand{\N}{\mathbb{N}}
\newcommand{\eps}{\varepsilon}
\DeclareMathOperator{\rep}{rep}
\DeclareMathOperator{\val}{val}
\newcommand{\infw}[1]{\mathbf{#1}}
\newcommand*{\rom}[1]{\expandafter\@slowromancap\romannumeral #1@}
\begin{document}
\title{On the pseudorandomness of Parry--Bertrand automatic sequences
}
\titlerunning{On the pseudorandomness of Parry--Bertrand automatic sequences}
%
\author{Pierre Popoli\orcidID{0000-0002-4243-9180}
\and
Manon Stipulanti\orcidID{0000-0002-2805-2465}}
\authorrunning{P. Popoli and M. Stipulanti}
%
\institute{Department of Mathematics, ULiège,
Liège, Belgium \\
\email{\{pierre.popoli,m.stipulanti\}@uliege.be}}
\maketitle              
\begin{abstract}

The correlation measure is a testimony of the pseudorandomness of a sequence $\infw{s}$ and provides information about the independence of some parts of $\infw{s}$ and their shifts.
Combined with the well-distribution measure, a sequence possesses good pseudorandomness properties if both measures are relatively small.
In combinatorics on words, the famous $b$-automatic sequences are quite far from being pseudorandom, as they have small factor complexity on the one hand and large well-distribution and correlation measures on the other.
This paper investigates the pseudorandomness of a specific family of morphic sequences, including classical $b$-automatic sequences.
In particular, we show that such sequences have large even-order correlation measures; hence, they are not pseudorandom.
We also show that even- and odd-order correlation measures behave differently when considering some simple morphic sequences.
\keywords{Combinatorics on words \and Correlation measures \and Abstract numeration systems \and Morphic sequences \and Automatic sequences \and Periodic sequences} 
\end{abstract}

\section{Introduction}

Pseudorandom sequences are generated by algorithms and behave similarly to truly random sequences.
To study the pseudorandomness of a sequence, a wide variety of measures has been introduced, called \emph{measures of complexity} or \emph{measures of pseudorandomness}.
For instance, from a purely theoretical point of view, the famous \emph{Kolmogorov complexity} of a sequence gives the length of the shortest program that generates it~\cite{Kolmogorov68}; see also~\cite{Chaitin1966,Solomonoff1964}. 
An infinite sequence is said to be \emph{algorithmically random} if its Kolmogorov complexity grows like the length of the sequence itself.
It is well known that Kolmogorov complexity is not a computable function; for example, see~\cite{Vitanyi2020} for a formal proof and history behind.

As a consequence, many other measures of complexity can be used, especially for practical issues. The series of papers~\cite{MS-I,MS-II}, initiated by Mauduit and S\'{a}rk\"{o}zy in 1997, introduces and studies several such measures.
Among the most important ones, the \emph{well-distribution measure} and the \emph{correlation measure} are considered in various contexts.
Let $\infw{s}=(\infw{s}(n))_{n\geq 0}$ be a sequence over the alphabet $\{0,1\}$.
For $a\in \mathbb{Z}$ and $b,M\in \mathbb{N}$, write $U(\infw{s},M,a,b)=\sum_{j=0}^{M-1}(-1)^{\infw{s}(a+jb)}$
and for $k \in \N_{\geq 1},D=(d_1,\ldots,d_k) \in \mathbb{N}^k$ such that $0\le d_1<d_2<\cdots<d_k$, write
\[
V(\infw{s},M,D)=\sum_{n=0}^{M-1}(-1)^{\infw{s}(n+d_1)+\infw{s}(n+d_2)\cdots +\infw{s}(n+d_k)}.
\]
The \emph{Nth well-distribution measure} of the sequence $\infw{s}$ is defined by
\[
W(\infw{s},N)=\max \limits_{a,b,M} \left| U(\infw{s},M,a,b) \right|=\max \limits_{a,b,M} \left|\sum_{j=0}^{M-1}(-1)^{\infw{s}(a+jb)} \right|,
\]
where the maximum is taken over all $a,b,M \in \mathbb{N}$ such that $0\leq a \leq a+(M-1)b<N$, and the \emph{$N$th correlation measure of order $k$} of $\infw{s}$ is given by
\[
C_k(\infw{s},N)
=\max \limits_{M,D} |V(\infw{s},M,D)|=\max \limits_{M,D} \left| \sum_{n=0}^{M-1}(-1)^{\infw{s}(n+d_1)+\infw{s}(n+d_2)\cdots +\infw{s}(n+d_k)} \right|, 
\]
where the maximum is taken over all vectors $D=(d_1,\ldots,d_k)\in \mathbb{N}^k$ and all integers $M$ such that $0\le d_1<d_2<\cdots<d_k$ and $M+d_k \leq N$. 

It is said that the sequence $\infw{s}$ possesses good properties of pseudorandomness if both its measures $W(\infw{s},N)$ and $C_k(\infw{s},N)$ are relatively small compared to $N$.
Indeed, Alon et al.~\cite{AKMMR2007}, improving a former result of Cassaigne, Mauduit, and S\'{a}rk\"{o}zy~\cite{CMS2001}, prove that the expected order of $W(\infw{s},N)$ and $C_k(\infw{s},N)$ when $\infw{s}$ is a truly random binary sequence is $\sqrt{N}(\log N)^{O(1)}$. Furthermore, it is crucial to simultaneously consider the two measures, since the smallness of one does not necessarily imply that of the other; see \cite{MS-I} for more details. However, it is sufficient to show that the correlation measure of a sequence is large to tell it apart from a pseudorandom sequence. Notice that the combination of the well-distribution and the correlation measures gives birth to the \emph{well-distribution-correlation measure}, which is however more complicated to study. 



Usually, the methods to analyze correlation measures are borrowed from number theory; see~\cite{Gyarmati2004,GM2012} and the references therein for general results. 
However, in this paper, we study a large family of sequences through the lens of their correlation measures by mixing tools from automata theory and general numeration systems.
Such a family is that of \emph{automatic sequences}; see~\cref{sec:background} for precise definitions.
In combinatorics on words, they are classical and have many beautiful properties.
Among others, they are known to be bad pseudorandom sequences as explained hereafter. 
First, it is well known that their \emph{factor complexity}, which counts the number of different blocks of each length appearing in the whole sequence, is bounded by a linear function and is, therefore, small.
On the other hand, over an alphabet of size $\ell$, the $n$th term of the factor complexity of a random sequence is of order $\ell^n$; see~\cite[Chapter~10]{AS03} for more details.
Second, Mérai and Winterhof~\cite{MW18} prove that the order-$2$ correlation of an automatic sequence is large, as stated below.

\begin{theorem}[{\cite[Theorem~7]{MW18}}]
\label{thm:AS_MW}
Let $b\ge 2$ be an integer and let $\infw{s}$ be a $b$-automatic binary sequence generated by a DFAO with $m$ states. Then $C_2(\infw{s},N) \ge \frac{N}{b(m+1)}$ for all $N\ge b(m+1)$.
\end{theorem}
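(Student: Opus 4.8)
The plan is to exploit the self-similar block structure of a $b$-automatic sequence. Fix a DFAO $(Q,\{0,\dots,b-1\},\delta,q_0,\tau)$ with $|Q|=m$ that produces $\infw{s}$ by reading base-$b$ expansions most-significant-digit first, and arrange (as one always may) that leading zeros fix the initial state, i.e.\ $\delta(q_0,0^k)=q_0$. For an integer $L\ge 1$ and a state $q\in Q$, let $B_q^{(L)}$ denote the word of length $b^L$ whose $i$th letter ($0\le i<b^L$) is $\tau(\delta(q,\overline{\imath}))$, where $\overline{\imath}$ is the length-$L$ base-$b$ expansion of $i$. The key observation is that the restriction of $\infw{s}$ to the interval $[jb^L,(j+1)b^L)$ equals $B_{q(j)}^{(L)}$, where $q(j)=\delta(q_0,\rep_b(j))$ is the state reached after reading the high-order digits of $n=jb^L+i$; in particular at most $m$ distinct blocks occur along $\infw{s}$.

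Next I would apply the pigeonhole principle to the $m+1$ states $q(0),q(1),\dots,q(m)$: two of them coincide, say $q(j_1)=q(j_2)$ with $0\le j_1<j_2\le m$. Then $B_{q(j_1)}^{(L)}=B_{q(j_2)}^{(L)}$, so setting $d_1=j_1b^L$ and $d_2=j_2b^L$ we get $\infw{s}(n+d_1)=\infw{s}(n+d_2)$ for every $n$ with $0\le n<b^L$. Hence each term of the corresponding correlation sum equals $(-1)^{2\infw{s}(n+d_1)}=1$, and $V(\infw{s},b^L,(d_1,d_2))=\sum_{n=0}^{b^L-1}1=b^L$. This choice is admissible in the definition of $C_2$ provided $b^L+d_2=(j_2+1)b^L\le N$, for which $(m+1)b^L\le N$ suffices.

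Finally I would optimize over $L$: take $L$ to be the largest integer with $(m+1)b^L\le N$, which exists and satisfies $L\ge 1$ precisely because $N\ge b(m+1)$. Maximality gives $b^{L+1}>N/(m+1)$, hence $b^L>N/(b(m+1))$, and therefore $C_2(\infw{s},N)\ge V(\infw{s},b^L,(d_1,d_2))=b^L>\frac{N}{b(m+1)}$, which is the claimed bound.

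The only delicate point I anticipate is the bookkeeping around representation lengths and leading zeros — one must be sure that $B_{q(j)}^{(L)}$ really is the factor of $\infw{s}$ occurring at position $jb^L$, including the boundary case $j=0$ — which is exactly where the normalization $\delta(q_0,0^k)=q_0$ and the choice of a most-significant-digit-first automaton come in. Everything else is a pigeonhole argument together with an elementary estimate; notably, no number-theoretic input is required, in contrast with the usual techniques for bounding correlation measures.
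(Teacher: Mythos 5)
Your proof is correct and is essentially the standard argument for this cited result: pigeonhole on the $m+1$ states $\delta(q_0,\rep_b(j))$ for $j=0,\dots,m$ yields two identical length-$b^L$ blocks of $\infw{s}$ at positions $j_1b^L$ and $j_2b^L$, so the order-$2$ correlation sum over that block equals $b^L$, and maximizing $L$ subject to $(m+1)b^L\le N$ gives the stated bound. The paper itself only quotes this theorem from M\'erai and Winterhof and instead proves the generalization in~\cref{thm:main} by running exactly your argument on the product of the DFAO with an automaton for the numeration language: your normalization $\delta(q_0,0^k)=q_0$ (which you rightly flag as the delicate point, since enforcing it could in principle cost an extra state under a stricter reading of ``DFAO with $m$ states'') corresponds there to working with $0^*L_\beta$ and taking $u$ non-empty, and your count $b^L$ of admissible length-$L$ suffixes becomes $U_\beta(M)$ via their Lemma~1.
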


The focus on correlation measures of automatic sequences dates back to the previous century.
In the late 1920s, Mahler~\cite{Mahler1927} analyzes the sequence $((-1)^{\infw{s_2}(n)})_{n\geq 0}$, where $\infw{s_2} \colon \N \to \N$ is the \emph{sum-of-digit function} in base $2$. In particular, he considers, for any non-negative integer $k$, the quantity 
\begin{align*}
    c_M(k)=\frac{1}{M} V(\infw{s_2},M,D)=\frac{1}{M}\sum_{n=0}^{M-1}(-1)^{\infw{s_2}(n)+\infw{s_2}(n+k)}
\end{align*}
where $D=(0,k)$, and shows that the sequence $(c_M(k))_{M\ge 0}$ converges to some real number $c(k)$.
It turns out that this quantity is related to the order-$2$ correlation measure of the $2$-automatic Thue--Morse sequence $\infw{t}=(\infw{t}(n))_{n\geq 0}$ as it is defined by $\infw{t}(n)=\infw{s_2}(n) \bmod 2$ for all $n\ge 0$.
We also refer to~\cite{Mauduit2001} for more historical background on $\infw{t}$ and specific values of the quantity $c(k)$. 
Later on, Mauduit and S\'{a}rk\"{o}zy~\cite{MS-II} prove that $C_{2}(\infw{t},N)\geq \frac{1}{12}N$ for $N\geq 5$.
However, when~\cref{thm:AS_MW} is applied to $\infw{t}$, the lower bound is improved as $C_{2}(\infw{t},N)\geq \frac{1}{6}N$ for $N\geq 6$; see~\cite{MW18} for more details.  
In a similar context, Baake and Coons \cite{BC2023} recently study the correlations of $\infw{t}$ with specific values of the shifted vector $D$, as well as their means with renormalization techniques.
Mazáč \cite{Mazac2023} carries out an analogous approach for the correlations of the Rudin--Shapiro sequence, another distinguished automatic sequence.
We also mention the work of Grant, Shallit, and Stoll~\cite{GrantShallitStoll-2009} (and papers following this trend), where correlations over alphabets of size more than $2$ are considered.

It is known that automatic sequences are morphic; see~\cite[Chapter~6]{AS03} and~\cref{sec:background} for precise definitions.
In this more general framework, the factor complexity is bounded by a quadratic function, which, in turn, again shows that they are far from being pseudorandom; see~\cite[Chapter~10]{AS03}.
Consequently, our study is driven by the natural question to know whether the order-$2$ correlation measure of any morphic sequence is large.
As a first step towards answering this question, we presently consider a subfamily of morphic sequences based on Parry--Bertrand numeration systems (see~\cref{sec:background} for precise definitions).
In the theory of numeration systems, they have gained the attention of the scientific community for the nice framework they provide and have especially been used in~\cite{CCS2021,CCS2022,MPR2019,Point2000,Stipulanti2019}.

Our main result is~\cref{thm:main} below (see again~\cref{sec:background} for precise definitions).
It improves~\cref{thm:AS_MW} in at least two ways.
First, we consider a much larger class of sequences as classical automatic sequences are $U_\beta$-automatic for some integer $\beta$.
Second, we deal with all even-order correlations, whereas only the case of order $2$ is considered in~\cite{MW18}. Notice that we also discuss the case of odd-order correlations for which such a theorem fails to hold; see~\cref{sec:linearly rec seq}. 
Note that, for two maps $f,g$ such that $g$ takes only non-negative values, we write $f\gg g$ (resp., $f \ll g$) if there exists a positive constant $c$ such that $|f(x)| \ge c g(x)$ (resp., $|f(x)| \le c g(x)$) for all sufficiently large $x$. 

\begin{theorem}
\label{thm:main}
Let $k\geq 1$ be an integer, $\beta \in \mathbb{R}_{>1}$ be a Parry number and $U_\beta$ be the corresponding canonical Parry--Bertrand numeration.
For all binary $U_\beta$-automatic sequences $\infw{s}$, we have $C_{2k}(\infw{s},N) \gg N$, where the constant depends on $\beta$, $k$, and the size of the automaton generating $\infw{s}$. 
\end{theorem}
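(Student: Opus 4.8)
The plan is to deduce \cref{thm:main} from a single combinatorial fact: a \emph{long repeated factor located near the origin} already forces a large even-order correlation, and the numeration-theoretic content only serves to exhibit such factors via the self-similarity carried by Parry--Bertrand representations. \textbf{Step 1 (from a repeated factor to a large correlation).} Suppose $\infw{s}$ contains a factor of length $\ell$ occurring at two positions $p_1<p_2$, i.e.\ $\infw{s}(p_1+r)=\infw{s}(p_2+r)$ for $0\le r<\ell$, and set $g=p_2-p_1\ge 1$. If $g\ge k$, put $D=(p_1,p_1+1,\dots,p_1+k-1,\,p_2,p_2+1,\dots,p_2+k-1)$, which is strictly increasing exactly because $p_1+k-1<p_2$, and $M=\ell-k+1$; for every $n\in[0,M)$ the exponent $\sum_{i=1}^{2k}\infw{s}(n+d_i)=\sum_{j=0}^{k-1}\bigl(\infw{s}(n+p_1+j)+\infw{s}(n+p_2+j)\bigr)=2\sum_{j=0}^{k-1}\infw{s}(n+p_1+j)$ is even, so $V(\infw{s},M,D)=M$. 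If instead $g<k$, the factor of length $\ell$ sitting simultaneously at $p_1$ and $p_2$ makes $\infw{s}$ periodic with period $g$ on $[p_1,p_1+\ell)$; put $D=(p_1,p_1+g,\dots,p_1+(2k-1)g)$ and $M=\ell-(2k-1)g$, so that $\sum_i\infw{s}(n+d_i)=2k\,\infw{s}(n+p_1)$ is even and again $V(\infw{s},M,D)=M$. In both cases $M+d_{2k}\le \ell+p_2$, so $\ell+p_2\le N$ implies $C_{2k}(\infw{s},N)\ge M\ge \ell-2k^2$. Hence it suffices to produce, for arbitrarily large $N$, a repeated factor of length $\ell$ at positions $p_1<p_2$ with $\ell+p_2\le N$ and $\ell\gg N$, the constant depending only on $\beta$ and on the number $m$ of states of the DFAO generating $\infw{s}$.

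\textbf{Step 2 (repeated factors from the Bertrand property).} Write $\infw{s}(n)=\tau(\delta(q_0,\rep_{U_\beta}(n)))$ for the generating DFAO $(Q,\delta,q_0,\tau)$ with $|Q|=m$. The essential input is that $U_\beta$ is Bertrand: appending a $0$ to an admissible word yields an admissible word. This is not quite enough for self-similarity, because --- unlike in an integer base --- two admissible words need not concatenate to an admissible word; the remedy is a fixed buffer of zeros. Let $Z$ be one more than the longest run of zeros occurring in $d_\beta^*(1)$, a constant depending only on $\beta$ since $\beta$ is a Parry number (so $d_\beta^*(1)$ is eventually periodic). A short check with Parry's admissibility condition then shows that for every admissible word $u$ ending in at least $Z$ zeros, every $i\ge 1$ and every $0\le r<U_\beta(i)$, the concatenation $u\,\rep_i(r)$ (with $\rep_i(r)$ the length-$i$ zero-padded representation of $r$) is again admissible, hence equals $\rep_{U_\beta}(\val_{U_\beta}(u0^i)+r)$. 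Consequently the word $\bigl(\infw{s}(\val_{U_\beta}(u0^i)+r)\bigr)_{0\le r<U_\beta(i)}=\bigl(\tau(\delta(\delta(q_0,u),\rep_i(r)))\bigr)_{0\le r<U_\beta(i)}$ depends only on the state $\delta(q_0,u)$ and on $i$. Now choose $L=L(\beta,m)$ with $U_\beta(L)>m+1$: among the $U_\beta(L)-1$ admissible words $v$ of length at most $L$ with nonzero leading digit, the words $u=v0^Z$ are pairwise distinct and number more than $m$, so two of them, $u_1\ne u_2$ of length at most $L+Z$, satisfy $\delta(q_0,u_1)=\delta(q_0,u_2)$. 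For every $i$ the admissible words $u_10^i$ and $u_20^i$ are distinct, hence so are their values; writing $\{p_1,p_2\}=\{\val_{U_\beta}(u_10^i),\val_{U_\beta}(u_20^i)\}$ with $p_1<p_2$, the block equality above yields a factor of $\infw{s}$ of length $\ell=U_\beta(i)$ occurring at both $p_1$ and $p_2$. (Under a least-significant-digit-first convention the same argument works after replacing ``state reached by $u$'' by ``state transformation induced by $u$'', of which there are at most $m^m$.)

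\textbf{Step 3 (size bookkeeping).} Since $U_\beta(n+1)/U_\beta(n)\to\beta$, these ratios are bounded above and below, so $U_\beta(L+Z+i)\ll U_\beta(i)$ uniformly in $i$ (the constant depending on $\beta$ and $L+Z$, hence on $\beta$ and $m$); as $p_1,p_2<U_\beta(L+Z+i)$ we get $p_2+\ell\le c\,U_\beta(i)$ for some $c=c(\beta,m)$. Feeding this into Step 1 with $N=\lceil c\,U_\beta(i)\rceil$ gives $C_{2k}(\infw{s},N)\ge U_\beta(i)-2k^2\ge N/(2c)$ once $i$ is large. Finally, $C_{2k}(\infw{s},N)$ is non-decreasing in $N$ and the thresholds $\lceil c\,U_\beta(i)\rceil$ grow by a bounded ratio, so the inequality spreads to all sufficiently large $N$, giving $C_{2k}(\infw{s},N)\gg N$ with a constant depending on $\beta$, $m$, and $k$.

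I expect the crux --- and the only genuinely delicate point --- to be Step 2: Parry--Bertrand representations are not freely concatenable the way base-$b$ representations are, so one must introduce the zero buffer of the correct length $Z$ and verify, through the Parry admissibility condition and its behaviour on long runs of zeros, that the block of length $U_\beta(i)$ read after a given state is truly independent of how that state was reached. Steps 1 and 3 are, respectively, the elementary exponent cancellation and routine growth estimates on $U_\beta$.
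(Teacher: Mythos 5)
Your proposal is correct, but it follows a genuinely different route from the paper on both of its main steps. For the reduction to a repeated factor, the paper locates $2k$ occurrences of one long block (pigeonhole over the first $(2k-1)\cdot|Q|\cdot|R|$ words of $L_\beta$ through a product automaton) and takes $D$ to be the $2k$ starting positions; you instead need only \emph{two} occurrences and manufacture the even-order vector $D$ from shifts, splitting on whether the gap $g$ exceeds $k$ or forces local $g$-periodicity. This is more economical and makes transparent why the parity of the order matters. For producing the repeated factor, the paper runs the Cartesian product of the DFAO with an automaton for $L_\beta$ and reads the repetition off the morphic representation $\infw{s}=\nu_\mathcal{P}(\varphi_\mathcal{P}^\omega(\alpha))$; you bypass the product and the morphic machinery entirely by inserting a zero buffer of length $Z$ (one more than the longest run of zeros in $d_\beta^*(1)$) and checking Parry's lexicographic admissibility condition directly, so that \emph{every} zero-padded length-$i$ representation is a valid continuation. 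Your check of that admissibility claim is sound (suffixes starting inside the continuation are handled by its own admissibility; suffixes starting inside $u$ either already witness strict inequality before reaching the buffer, or end in $Z$ zeros and hence cannot be a prefix of $d_\beta^*(1)$, or lie entirely in the buffer and start with $0<t(1)$). This buys you something the paper's argument does not quite deliver as stated: the two identical blocks have length exactly $U_\beta(i)$ regardless of which state of the $L_\beta$-automaton the prefix reaches, whereas the number of valid length-$M$ continuations from an arbitrary state is only $\Theta(\beta^M)$ in general (e.g., in the Zeckendorf system the state reached by reading $1$ admits fewer length-$M$ continuations than $U_\varphi(M)$), so the paper's identity $|\varphi_\mathcal{P}^{M}(\Delta((a_0,r_0),x))|=U_\beta(M)$ needs the same kind of normalization you build in. What the paper's formulation buys in exchange is a statement entirely in terms of the morphisms $\varphi_\mathcal{P}$, $\nu_\mathcal{P}$, which is the form it wants for the prospective extension to general morphic sequences and to non-Parry Bertrand systems discussed in \cref{sec:conclusion}. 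Your Step 3 bookkeeping (bounded ratios of consecutive $U_\beta(i)$ and monotonicity of $C_{2k}$ in $N$) matches the paper's.
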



The present paper is organized as follows.
In~\cref{sec:background}, we recall the necessary background on combinatorics on words, automatic sequences, and numeration systems.
\cref{sec:our result} is divided into two parts.
In~\cref{sec:linearly rec seq}, we deal with the case of linearly recurrent sequences.
In particular, we highlight the fact that even- and odd-order correlation measures behave differently, which will imply that there is no hope of extending~\cref{thm:main} to odd-order correlations.
In~\cref{sec:proof of main}, we show with~\cref{thm:main} that our specific morphic sequences have a large correlation measure of even orders.
Finally,~\cref{sec:conclusion} presents possible future work.


\section{Background}\label{sec:background}

\subsection{On combinatorics on words}\label{sec:cow}

As a general reference on words, we point out~\cite{Loth97}.
An \emph{alphabet} is a finite set of elements called \emph{letters}.
A \emph{word} on an alphabet $A$ is a sequence of letters from $A$. It is either \emph{finite} or \emph{infinite}.
In order to distinguish them, infinite words are written in bold.
The \emph{length} of a finite word, denoted between vertical bars, is the number of letters it is made of.
The \emph{empty word} is the only $0$-length word.
For all $n\ge 0$, we let $A^n$ denote the 
set of all length-$n$ words over $A$.
We let $A^*$ denote the set of finite words over $A$, including the empty word, and $A^\omega$ or $A^\N$ that of infinite words over $A$.
The first is equipped with the concatenation of words.
Let $A$ and $B$ be finite alphabets.
A \emph{morphism} $f \colon A^* \to B^*$ is a map satisfying $f(uv)=f(u)f(v)$ for all $u,v\in A^*$.
In particular, $f(\eps)=\eps$.
For an integer $\ell\ge 2$, a morphism is \emph{$\ell$-uniform} if it maps each letter to a length-$\ell$ word.
A $1$-uniform morphism is called a \emph{coding}.
An infinite word $\infw{x}$ is \emph{morphic} if there exist a morphism $f\colon A^* \to A^*$, a coding $g\colon A^* \to B^*$, and a letter $a \in A$ such that $\infw{x} = g(f^{\omega}(a))$, where $f^{\omega}(a) = \lim_{n\to \infty} f^n(a)$.


\subsection{On generalized automatic sequences}\label{sec:NS}

\emph{Abstract numeration systems} were introduced at the beginning of the century by Lecomte and Rigo~\cite{Lecomte-Rigo-2001}; see also~\cite[Chapter~3]{CANT10} for a general presentation.
Such a numeration system is defined by a triple $S=(L,A,<)$ where $A$ is an alphabet ordered by the total order $<$ and $L$ is an infinite \emph{regular} language over $A$, i.e., accepted by a deterministic finite automaton. 
We say that $L$ is the \emph{numeration language} of $S$.
When we \emph{genealogically} (i.e., first by length, then using the dictionary order) order the words of $L$, we obtain a one-to-one correspondence $\rep_S$ between $\N$ and $L$.
Then, the \emph{$S$-representation} of the non-negative integer $n$ is the $(n+1)$st word of $L$, and the inverse map, called the \emph{(e)valuation map}, is denoted by $\val_S$.

\begin{example}
  Consider the abstract numeration system $S$ built on the language $a^* b^*$ over the ordered alphabet $\{a,b\}$ with $a<b$.
  The first few words in the language are $\eps,a,b,aa,ab,bb,aaa$. 
  For instance, $\rep_S(5)=bb$ and $\val_S(aaa)=6$.
\end{example}

Let $S=(L,A,<)$ be an abstract numeration system.
An infinite word $\mathbf{x}$ is {\em $S$-automatic} if there exists a deterministic finite automaton with output (DFAO) $\mathcal{A}$ such that, for all $n\ge 0$, the $n$th term $\mathbf{x}(n)$ of $\infw{x}$ is given by the output $\mathcal{A}(\rep_S(n))$ of $\mathcal{A}$.
See~\cite{AS03,RM2002,Sha88} for general references. 

\subsection{Representation of real numbers}

We now recall several definitions and results about representations of real numbers; see, for instance,~\cite{CANT10,Rigo14-2}.
Let $\beta \in \mathbb{R}_{>1}$ and let $A_\beta=\{0, 1, \ldots, \lceil \beta \rceil -1 \}$.
Every real number $x \in [0, 1)$ can be written as a series $x = \sum_{j=1}^{+\infty} c_j \beta^{-j}$, where $c_j \in A_\beta$ for all $j\ge 1$. 
We let $d_\beta(x)$ denote the \emph{$\beta$-expansion} of $x$ obtained greedily.
In an analogous way, the \emph{$\beta$-expansion} $d_\beta(1)$ of $1$ is $d_\beta(1) = \lim_{x\to 1} d_\beta(x)$.
We define the \emph{quasi-greedy $\beta$-expansion} $d_\beta^*(1)$ of $1$ as follows. 
Write $d_\beta (1) = (t(n))_{n\ge 1}$. 
If $d_\beta (1)= t(1) \cdots t(m) 0^\omega$ with $t(m) \neq 0$, then $d_\beta^*(1) = (t(1) \cdots t(m-1) (t(m) - 1))^\omega$; otherwise $d_\beta^*(1) = d_\beta(1)$.
We define the \emph{$\beta$-shift} $S_\beta$ as the topological closure of the set $D_\beta=\{ d_\beta(x) \mid x\in[0,1) \}$.
A real number $\beta > 1$ is a \emph{Parry number} if $d_\beta(1)$ is ultimately periodic. 

\begin{example}\label{ex:integer-phi-et-phi-sqr}
Any integer $\beta>1$ is a Parry number.
The golden ratio $\varphi$ is a Parry number for which $d_\varphi(1)=11$ and $d^*_\varphi(1)=(10)^\omega$ (since $1 = 1/\varphi + 1/\varphi^2$).
The square $\varphi^2$ of the golden ratio is also a Parry number with $d_{\varphi^2}(1)=21^\omega=d^*_{\varphi^2}(1)$.
\end{example}

Parry's theorem \cite{Par60} notably provides a purely combinatorial condition to check if an infinite sequence is in $S_{\beta}$ by comparing this sequence to $d_\beta^*(1)$ using the lexicographic order. 


\begin{proposition}[{\cite[Corollaries~2.71 and~2.72]{Rigo14-2}}]\label{pro:auto}
Let $\beta \in \mathbb{R}_{>1}$ be a Parry number. 
Write $d_\beta (1)= t(1) \cdots t(m) (t(m+1)\cdots t(m+k))^\omega$ (where $m,k\geq 0$ are taken to be minimal).
Then an infinite word belongs to $S_\beta$ if and only if it is the label of a path in the automaton $\mathcal{A}_\beta=(Q,a_0, A_\beta, \delta, Q)$, where $Q=\{a_0,\ldots,a_{m+k-1}\}$ and where the transition function $\delta$ is defined as
\begin{align*}
    &\delta(a_{i-1},t)=a_0, \; \text{for all }1\leq i <m+k \text{ and all } 0\leq t<t(i); \\
    &\delta(a_{i-1},t(i))=a_i, \; \text{for all }1\leq i <m+k; \\
    &\delta(a_{m+k-1}, t(m+k)) = a_m.    
\end{align*}
\end{proposition}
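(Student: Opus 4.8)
The approach is to go through Parry's theorem, which characterizes membership in the $\beta$-shift combinatorially: writing $u=(u_n)_{n\ge 1}$ for the quasi-greedy expansion $d_\beta^*(1)$ of $1$ (an eventually periodic word whose letters are the $t(i)$ appearing in the statement, with the standard adjustment in the case where $d_\beta(1)$ is finite), an infinite word $x=(x_n)_{n\ge 1}$ over $A_\beta$ lies in $S_\beta$ if and only if each of its suffixes $x_{i+1}x_{i+2}\cdots$ (for $i\ge 0$) is lexicographically at most $u$. So the plan is to show that $\mathcal A_\beta$ accepts exactly the infinite words with this property, by reading the meaning of its states off the transition table.

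Two facts about $u$ drive the argument. First, $u$ is admissible: being the limit from below of the $\beta$-expansions of reals in $[0,1)$, it belongs to $S_\beta$, so applying Parry's theorem to $u$ itself shows that $u$ is \emph{shift-maximal}, i.e.\ every suffix of $u$ is lexicographically $\preceq u$. Second, because $\beta$ is a Parry number, $u$ has preperiod at most $m$ and its periodic tail has period dividing $k$, which is precisely why the $m+k$ states $a_0,\dots,a_{m+k-1}$ suffice.

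I would then establish, by induction on the length of the prefix already read, the invariant that governs the states: if $\mathcal A_\beta$ has not died after reading $x_1\cdots x_n$, it sits in the state indexed by the length $\ell$ of the longest suffix of $x_1\cdots x_n$ that is a prefix of $u$ — where an index $\ell<m$ is the state $a_\ell$ and an index $\ell\ge m$ is collapsed to $a_{m+((\ell-m)\bmod k)}$ by periodicity — and, moreover, there is a transition out of this state on the next letter $c$ exactly when $c\le u_{\ell+1}$ (the letter expected to continue the match, read cyclically once $\ell\ge m$), going to the state of index $\ell+1$ when $c=u_{\ell+1}$ and to $a_0$ when $c<u_{\ell+1}$. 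The one genuinely non-routine point is the reset to $a_0$: if $u_1\cdots u_\ell c$ with $c<u_{\ell+1}$ had a nonempty proper suffix $u_1\cdots u_{\ell'}$ that is again a prefix of $u$, then $u_1\cdots u_{\ell'-1}$ would be a suffix of $u_1\cdots u_\ell$, so the suffix of $u$ starting at position $\ell-\ell'+1$ would begin with $u_1\cdots u_{\ell'-1}$, and shift-maximality would then force $u_{\ell+1}\le u_{\ell'}=c$, against $c<u_{\ell+1}$; hence the longest such suffix is empty. Matching the transition pattern produced by this invariant against the three families in the statement — the advance edges $\delta(a_{i-1},t(i))=a_i$ and the wrap $\delta(a_{m+k-1},t(m+k))=a_m$, the reset edges $\delta(a_{i-1},t)=a_0$ for $t<t(i)$, and the absence of any edge when $c$ exceeds the expected letter — then shows they agree.

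Combining the invariant with Parry's theorem closes both directions. If $x$ labels an infinite path starting at $a_0$, then at every step the letter read is $\le$ the expected one, which says exactly that no suffix $x_{i+1}x_{i+2}\cdots$ ever strictly exceeds $u$ at any position, i.e.\ each such suffix is $\preceq u$; hence $x\in S_\beta$. Conversely, if $x\in S_\beta$, then every suffix of $x$ is $\preceq u$, so the letter read never exceeds the expected one, the invariant shows the run never dies, and $x$ labels a path starting at $a_0$. I expect the real work to lie not in Parry's theorem (which we may invoke) but in (i) the correct identification of the states together with the ``reset collapses to $a_0$'' lemma via shift-maximality, and (ii) the periodic bookkeeping — verifying that a suffix-prefix match of length $m+k$ is the state $a_m$, and handling consistently the simple Parry case, where $d_\beta(1)$ is finite and one must pass to $d_\beta^*(1)$.
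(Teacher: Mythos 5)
The paper offers no proof of this proposition: it is imported verbatim from \cite{Rigo14-2}, so there is nothing internal to compare against. Your argument is correct and is essentially the standard one from the literature: Parry's lexicographic criterion, plus the identification of $\mathcal{A}_\beta$ as the (degenerate) failure-function automaton for $d_\beta^*(1)$, where shift-maximality of $d_\beta^*(1)$ --- itself obtained by applying Parry's criterion to $d_\beta^*(1)\in S_\beta$ --- is exactly what collapses every failure transition to $a_0$ and what upgrades ``the run never dies'' to ``every suffix is $\preceq d_\beta^*(1)$.'' The one genuine point of care is the one you flag: the statement is phrased with $d_\beta(1)$, but the lexicographic comparison, the state count $m+k$, and the wrap transition $\delta(a_{m+k-1},t(m+k))=a_m$ all live on the quasi-greedy expansion $d_\beta^*(1)$, so in the simple Parry case one must pass to $d_\beta^*(1)$ for the invariant (and the statement as literally written) to come out right.
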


Every Parry number is canonically associated with a linear positional numeration system.
We now expose this framework.
Let $U=(U(n))_{n\ge 0}$ be an increasing sequence of integers with $U(0)=1$.
Any integer $n$ can be decomposed uniquely in a greedy way as $n=\sum_{i=0}^t c_i\, U(i)$ with non-negative integer coefficients $c_i$.
The finite word $\rep_U(n)=c_t\cdots c_0$ is called the {\em (greedy) $U$-representation} of $n$. 
By convention, the representation of $0$ is the empty word~$\eps$.
The finiteness of the \emph{digit-set} $A_U$ for greedy representations is implied by the boundedness of the sequence $\sup_{i\ge 0} U(i+1)/U(i)$. 
A sequence $U$ satisfying all the above conditions defines a \emph{positional numeration systems}.
The \emph{numeration language} is the set $L_U = \{ \rep_U(n) \mid n\ge 0\}$.
For any $c_t\cdots c_0\in\mathbb{N}^*$, we let $\val_U(c_t\cdots c_0)$ denote the integer $\sum_{i=0}^t c_i\, U(i)$.
In addition, a positional numeration system is \emph{linear} if $U$ satisfies a linear recurrence relation.

\begin{example}\label{ex:Fibonacci-Zeckendorf-ANS}
  The Fibonacci sequence $F=(F(n))_{n\ge 0}$ with initial conditions $F(0)=1$ and $F(1)=2$ and $F(n+2)=F(n+1)+F(n)$ for all $n\ge 0$ gives rise to the famous \emph{Zeckendorf} or \emph{Fibonacci} linear positional numeration system~\cite{Zeck72}.
  The numeration language is $L_F=\{\eps\}\cup 1\{0,01\}^*$.
\end{example}

Let $\beta \in \mathbb{R}_{>1}$ be a Parry number. We define a particular linear positional numeration system $U_\beta=(U_\beta(n))_{n\ge 0}$ associated with $\beta$ as follows.
Write $d_\beta (1)= t(1) \cdots t(m) (t(m+1)\cdots t(m+k))^\omega$ ($m,k$ are minimal and might be zero).
We define $U_\beta(0)=1$, $U_\beta(i)= t(1) U_\beta(i-1) + \cdots + t(i) U_\beta(0) + 1$ for all $i\in \{1,\ldots,m+k-1\}$ and
\begin{align*}
U_\beta(n)=& \; t(1) U_\beta(n-1) + \cdots + t(m+k) U_\beta(n-m-k) + U_\beta(n-k) \\ 
&- t(1) U_\beta(n-k-1) - \cdots - t(m) U_\beta(n-m-k)
\end{align*}
for all $n\ge m+k$.
We set $L_\beta = L_{U_\beta}$ for the sake of readability.


 \begin{example}
 \label{ex:integer-phi-et-phi-sqr-num-sys}
     We resume~\cref{ex:integer-phi-et-phi-sqr}.
     For the golden ratio, we obtain $m=2$, so $U_\varphi(0)=1$, $U_\varphi(1)=1 \cdot U_\varphi(0) + 1=2$ and $U_\varphi(n) = 1 \cdot U_\varphi(n-1) + 1 \cdot U_\varphi(n-2)$ for all $n\ge 2$.
     We find back the Zeckendorf numeration system of~\cref{ex:Fibonacci-Zeckendorf-ANS}.
     For its square, we have $m=1=k$, so $U_{\varphi^2}(0)=1$, $U_{\varphi^2}(1)=2 \cdot U_{\varphi^2}(0) + 1=3$ and $U_{\varphi^2}(n) = 3 \cdot U_{\varphi^2}(n-1) - 1 \cdot U_{\varphi^2}(n-2)$ for all $n\ge 2$.
     Its first few elements are thus $1,3,8,21,55$, the even-indexed Fibonacci numbers. 
 \end{example}

The numeration system $U_\beta$ has two useful properties.
First, it possesses the \emph{Bertrand property}, i.e., for all $w \in A^+_{U_\beta}$, $w \in L_\beta \Leftrightarrow w0 \in L_\beta$; see~\cite{Ber89} for the original version and~\cite{CCS2021} for the recently corrected one.
In that case, any word $w$ in the set $0^*L_\beta$ of all $U_\beta$-representations with leading zeroes is the label of a path in the automaton $\mathcal{A}_\beta$ from~\cref{pro:auto}~\cite[Theorem~2]{CCS2021}.
Second, it satisfies the \emph{dominant root condition}, i.e., there is some complex number $c$ such that
\begin{align}
\lim\limits_{n \rightarrow +\infty} \frac{U_{\beta}(n)}{\beta^n} = c \label{eq:conv-beta-U};
\end{align}
see, for instance,~\cite[Theorem~2]{CCS2021}.
In the following, we will consider \emph{Parry--Bertrand automatic} sequences, which are defined as $S$-automatic for the abstract numeration system built on the language $L_\beta$ of $U_\beta$ for some Parry number $\beta$.
For the sake of conciseness, we call them \emph{$U_\beta$-automatic}.
Notice that when $\beta$ is an integer $b$, the corresponding sequences are usually called \emph{$b$-automatic}. 

\begin{example}
\label{ex:Fibonacci-sum-of-digits}
    In the Zeckendorf numeration system $F$ from~\cref{ex:Fibonacci-Zeckendorf-ANS}, if we write $\rep_F(n) = n_\ell \cdots n_0$ for each non-negative integer $n$, then the \emph{sum-of-digit function $\infw{s}_F \colon \N \to \N$} is defined by $\infw{s}_F (n)= \sum_{i=0}^\ell n_i$.  
    When modded out by $2$, it is $F$-automatic (or Fibonacci-automatic) as the DFAO in~\cref{fig:Fibonacci-DFAO} produces it.
    Note that it is~\cite[A095076]{Sloane}.
\begin{figure}
\begin{center}
\begin{tikzpicture}
\tikzstyle{every node}=[shape=circle,fill=none,draw=black,minimum size=20pt,inner sep=2pt]
\node(a) at (0,0) {$0$};
\node(b) at (2,0) {$1$};

\tikzstyle{every node}=[shape=circle,fill=none,draw=none,minimum size=10pt,inner sep=2pt]
\node(a0) at (-1,0) {};
\node(a1) at (0,1) {$0$};

\tikzstyle{every path}=[color =black, line width = 0.5 pt]
\tikzstyle{every node}=[shape=circle,minimum size=5pt,inner sep=2pt]
\draw [->] (a0) to [] node [] {}  (a);

\draw [->] (a) to [loop above] node [] {$0$}  (a);
\draw [->] (b) to [loop above] node [] {$0$}  (b);

\draw [->] (a) to [bend left] node [above] {$1$}  (b);
\draw [->] (b) to [bend left] node [below] {$1$}  (a);
;
\end{tikzpicture}
\caption{A DFAO producing the sum-of-digit function $\infw{s}_F$ modulo $2$ in the Zeckendorf numeration system $F$.}
\label{fig:Fibonacci-DFAO}
\end{center}
\end{figure}
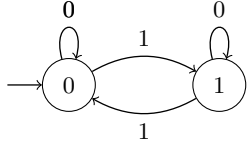
\end{example}

%


\section{Main results} \label{sec:our result}

As stated in the introduction, the main purpose of this paper resides in~\cref{thm:main}, which extends~\cref{thm:AS_MW} to a larger family of morphic sequences by considering all even-order correlations. 
Such a result will also imply that this family of morphic sequences cannot be qualified as pseudorandom, such as classical automatic sequences, from the point of view of correlations. 


\subsection{Linearly recurrent sequences}
\label{sec:linearly rec seq}

Before developing the arguments for~\cref{thm:main}, we first deal with so-called linearly recurrent sequences.
A sequence $\infw{s}$ is \emph{recurrent} if any factor of $\infw{s}$ occurs infinitely often in $\infw{s}$.
It is moreover \emph{uniformly recurrent} if the distance between two consecutive occurrences of any factor $x$ in $\infw{s}$ is bounded by a constant that depends only on $x$.
For instance, the Thue-Morse sequence is uniformly recurrent.
We say that $\infw{s}$ is \emph{linearly recurrent} if the gap between two consecutive occurrences of any length-$n$ factor $x$ in $\infw{s}$ is bounded by $C\cdot n$, where $C$ is a constant.
See~\cite{ACSZ2019} for more examples.

\begin{theorem}
\label{thm:linearly recurrent sequence}
    Let $\infw{s}$ be a linearly recurrent sequence. For all $k\geq 1$, we have $C_{2k}(\infw{s},N)\gg N$.  
\end{theorem}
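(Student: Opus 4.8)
The plan is to exploit the linear recurrence to force many repeated length-$(2k-1)$ patterns inside a short window, and then pick a shift vector $D$ that makes the sign sum $(-1)^{\infw{s}(n+d_1)+\cdots+\infw{s}(n+d_{2k})}$ evaluate to $+1$ on a positive-density set of indices $n$. First I would fix a factor $x$ of $\infw{s}$ of some short length $\ell$ (say $\ell = 2k-1$, or even $\ell=1$ to keep things simple), and use linear recurrence: every occurrence of $x$ in $\infw{s}$ is followed by another occurrence at distance at most $C\ell$. In particular, among the first $N$ positions there are at least $N/(C\ell) - O(1)$ occurrences of $x$. Let $0 \le p_1 < p_2 < \cdots$ be the starting positions of these occurrences.

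The key idea is that if $x$ occurs at position $p$ and at position $q$, then $\infw{s}(p+i) = \infw{s}(q+i)$ for all $0 \le i < \ell$, so the letters $\infw{s}$ takes at these two blocks coincide pairwise. Now consider the shift vector $D$ obtained by concatenating a block of small offsets with the same block translated by $g := q-p$, i.e.\ $D = (d_1,\dots,d_k, d_1+g, \dots, d_k+g)$ with $0 \le d_1 < \cdots < d_k < \ell \le g$ — provided the $2k$ entries are genuinely distinct and increasing, which holds as soon as $g \geq \ell$. For such a $D$, at any index $n$ with $\infw{s}(n+d_j) = \infw{s}(n+d_j+g)$ for every $j$, the exponent $\sum_{j} \infw{s}(n+d_j) + \sum_j \infw{s}(n+d_j+g)$ is even, so the summand equals $+1$. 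The strategy is therefore to choose $g$ to be a single fixed gap value that recurs often (by pigeonhole there is a gap value $g_0 \le C\ell$ that occurs for a positive proportion $\ge 1/(C\ell)$ of the occurrences), set $D$ using that $g=g_0$, and count: whenever $n$ is such that $x$ occurs at both $n$ and $n+g_0$ (equivalently $p$ and $p+g_0$ are consecutive occurrence positions differing by exactly $g_0$, and then take $n=p$), all $2k$ shifted values pair up and the contribution is $+1$. This yields $V(\infw{s},M,D) \ge (\text{number of such good }n) - (\text{bad }n) \gg N$ provided the good indices dominate.

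To make the counting clean I would actually take $\ell$ just large enough that $x$ determines the relevant values: with $g_0 \le C\ell$ fixed, I want $\infw{s}(n+d_j) = \infw{s}(n+g_0+d_j)$ for $j=1,\dots,k$, which is guaranteed if the length-$\ell$ factor starting at $n$ equals the one starting at $n+g_0$ and $\ell$ is chosen $\ge g_0 + d_k$; iterating the linear-recurrence bound a bounded number of times keeps $\ell$ and hence $g_0$, and hence the final $D$, all bounded by constants depending only on $k$ and $C$. Then $M + d_{2k} \le N$ forces us to restrict $M$ to roughly $N - O(1)$, which is fine. Summing over the $\gg N$ good indices $n$ (those where the length-$\ell$ window repeats at gap exactly $g_0$) gives $V(\infw{s},M,D) \gg N$, hence $C_{2k}(\infw{s},N) \gg N$, where the implied constant depends on $C$ and $k$ only.

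The main obstacle I anticipate is the bookkeeping needed to guarantee that the chosen offsets $d_1,\dots,d_k,d_1+g_0,\dots,d_k+g_0$ are all distinct and strictly increasing — one must take the $d_j$ small (e.g.\ $d_j = j-1$) and ensure $g_0 > d_k$, which may require first passing to a slightly longer factor or to a larger (but still bounded) gap by applying linear recurrence a bounded number of times — and, more importantly, arguing that a single gap value $g_0$ recurs with positive frequency rather than merely that gaps are bounded on average. The pigeonhole argument over the at most $C\ell$ possible gap values handles this, but care is needed because consecutive-occurrence gaps, not arbitrary return times, are what the definition of linear recurrence controls; one should phrase everything in terms of consecutive occurrences and possibly refine $x$ so that the ``repeat at gap exactly $g_0$'' event still captures a positive density of starting positions. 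Once that density statement is secured, the sign cancellation argument is immediate and the rest is routine.
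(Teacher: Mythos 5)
There is a genuine gap in your argument, and it sits exactly at the point you flag with ``provided the good indices dominate'': you never establish that domination, and in general it fails. With a factor $x$ of \emph{bounded} length $\ell$ and a fixed gap $g_0$, the pigeonhole argument only gives you a set of good indices $n$ of density at least $1/(C\ell)^2$ (or so) in $\{0,\dots,M-1\}$. But $V(\infw{s},M,D)$ sums over \emph{all} $n$ from $0$ to $M-1$; you cannot restrict to the good ones. Each bad index contributes $\pm 1$ with a sign you do not control, so all you get is $V \ge G - B$ with $G \ll M/2 \ll B$, which gives no lower bound on $|V|$ --- the bad contributions could cancel the good ones exactly. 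Your final paragraph (``Summing over the $\gg N$ good indices $n$ \ldots gives $V \gg N$'') silently drops the bad indices, which is where the proof breaks. A density of coincidences strictly between $0$ and $1/2$ is simply not enough to force $|V|$ to be large.

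The fix, which is what the paper does, is to let the repeated factor grow with $N$ rather than keeping it bounded: take $x$ to be the \emph{prefix} of $\infw{s}$ of length $\lfloor N/(C+1)\rfloor$. Linear recurrence then produces a second occurrence of this entire prefix at some distance $d \le C\lfloor N/(C+1)\rfloor$, and with $M = \lfloor N/(C+1)\rfloor$ and $D=(0,d)$ \emph{every} index $n\in\{0,\dots,M-1\}$ is ``good'' (since $\infw{s}(n)=\infw{s}(n+d)$ throughout the window), so $V = M$ exactly and there are no bad terms to worry about; the general even order $2k$ is handled by taking $2k$ occurrences of a suitably shorter prefix and putting all $2k$ starting positions into $D$, so that the exponent $\sum_j \infw{s}(n+d_j)$ is $2k$ times a single value, hence even, for every $n$ in the window. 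Your pairing idea (offsets plus the same offsets translated by a gap, so exponents pair up mod $2$) is sound and is essentially the same parity mechanism the paper uses, but it only yields the theorem once the repetition covers the whole summation range, not merely a positive-density subset of it.
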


\begin{proof}
    Let $C>0$ be such that two consecutive occurrences of any factor $x$ of $\infw{s}$ being at positions $i,j$ satisfies $\lvert j-i \lvert <C|x|$. Let $N\geq 1$ be sufficiently large and consider the prefix $x=s(0)s(1)\ldots s(\lfloor \frac{N}{C+1} \rfloor -1)$ of $\infw{s}$. By assumption, there exists $d \leq C\lfloor \frac{N}{C+1} \rfloor$ such that $s(i) = s(d+i)$ for all $i\in\{0,\ldots,\lfloor \frac{N}{C+1} \rfloor -1\}$ and $d+\lfloor \frac{N}{C+1} \rfloor -1  \leq N$. Therefore, for $D=(0,d)$ and $M=\lfloor \frac{N}{C+1}\rfloor$, we have $\sum_{n=0}^{M-1}(-1)^{s(n)+s(n+d)}=\sum_{n=0}^{M-1} (-1)^{2s(n)}=M$. This proves that $C_{2}(\infw{s},N)\gg N$. 
    
    The case of even-order of correlations can be treated similarly by considering $2k$ consecutive occurrences of the prefix $x=s(0)s(1)\ldots s(\frac{1}{2k}\lfloor \frac{N}{C+1} \rfloor -1)$, $d\leq \frac{C}{2k}\lfloor \frac{N}{C+1} \rfloor$ such that $s(i)=s(jd+i)$ for all $j\geq 0$ and $i\in\{0,\ldots,\lfloor \frac{N}{C+1} \rfloor -1\}$, and $D=(0,d,\ldots,(2k-1)d)$. 
    \qed
 \end{proof}





\begin{remark}
\label{rk:even order vs odd order}
Surprisingly, the case of odd-order correlations cannot be treated similarly.
To highlight this, we consider the case of \emph{periodic} sequences, defined as the infinite repetition $u^\omega=uuu\cdots$ of a (finite) pattern $u$ and which are automatic in any numeration system.
More specifically, consider the simple periodic sequence $\infw{s}=(01)^\omega$.
For all $k\geq 1$ and all $N\ge 1$, we have $C_{2k+1}(\infw{s},N)=1$ since, for $D=(d_1,\ldots,d_k)$, $|V(\infw{s},M,D)|$ equals $1$ if $M$ is odd, $0$ otherwise.
Actually, this simple observation explains why the statement of our main result does not hold for odd-order correlations. 
\end{remark}

Note that there exist automatic sequences that are not uniformly recurrent (and therefore not linearly recurrent). 
For instance, consider the $3$-automatic sequence $\infw{ca}=ababbbababbbbbbbbbabab\cdots$ defined as the fixed point of the uniform morphism $a\mapsto aba,b\mapsto bbb$ (sometimes referred to as the \emph{Cantor sequence}; see, for instance,~\cite[Section~2.4.6]{Walnut}) and for which consecutive occurrences of the factor $aba$ are separated by larger and larger gaps.
Therefore,~\cref{thm:main} covers a larger class of sequences than~\cref{thm:linearly recurrent sequence}.

\subsection{Proof of \Cref{thm:main}}
\label{sec:proof of main}

Now we attack the proof of~\cref{thm:main} for which we need some additional preparation.
First, we count the number of words of each length in the numeration language with leading zeroes, since this language will appear later on.

\begin{lemma}
\label{lem:nbr words fixed len}
Let $\beta \in \mathbb{R}_{>1}$ be a Parry number and consider the canonical Parry--Bertrand numeration, $U_\beta$ whose numeration language is $L_\beta$.
For all $n\ge 0$, the number of length-$n$ words in $0^*L_\beta$ is $U_\beta(n)$.
\end{lemma}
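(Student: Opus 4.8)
The plan is to count length-$n$ words in $0^*L_\beta$ by relating them to paths of length $n$ in the automaton $\mathcal{A}_\beta$ of \cref{pro:auto}. By the Bertrand property, as recalled just before \cref{lem:nbr words fixed len}, a word $w \in 0^*L_\beta$ if and only if $w$ is the label of a path in $\mathcal{A}_\beta$ starting at the initial state $a_0$ (using \cite[Theorem~2]{CCS2021}). Since every state of $\mathcal{A}_\beta$ is accepting, the number of length-$n$ words in $0^*L_\beta$ equals the number of length-$n$ paths in $\mathcal{A}_\beta$ issued from $a_0$; write $p(n)$ for this quantity. First I would set up the bijection carefully, noting that distinct labels give distinct words (so there is no overcounting), and that $p(0)=1=U_\beta(0)$ matches the base case.

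Next I would show that $p(n) = U_\beta(n)$ for all $n$, and the cleanest way is to prove that $p$ satisfies the same recurrence and initial conditions as $U_\beta$. For the initial conditions, I would argue directly from the structure of $\delta$ in \cref{pro:auto}: for $1 \le i < m+k$, the paths of length $i$ from $a_0$ either reach $a_i$ (there is exactly one such, labelled $t(1)\cdots t(i)$) or fall back to $a_0$ at some earlier step and then must stay reachable; counting these by summing over the step at which the path first "drops" to $a_0$ gives $p(i) = t(1)p(i-1) + \cdots + t(i)p(0) + 1$, matching the definition of $U_\beta(i)$. Here the $+1$ comes from the single path that never drops. For $n \ge m+k$ I would do the analogous first-return / last-visit decomposition, this time using that the periodic part of $d_\beta(1)$ makes the drop-structure eventually translation-invariant; the bookkeeping yields exactly the stated linear recurrence for $U_\beta(n)$ with the correcting terms $-t(1)U_\beta(n-k-1) - \cdots - t(m)U_\beta(n-m-k)$.

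An alternative, and perhaps slicker, route that I would keep in reserve: it is classical (and follows from \cite{Rigo14-2} or \cite{CCS2021}) that $\#(L_\beta \cap A_{U_\beta}^n)$, the number of genuine $U_\beta$-representations of length exactly $n$, equals $U_\beta(n) - U_\beta(n-1)$ (the integers with representation of length exactly $n$ are precisely those in $[U_\beta(n-1), U_\beta(n))$, with the convention that the empty word is the length-$0$ representation of $0$). Then a word in $0^*L_\beta$ of length $n$ is uniquely $0^j v$ where $v \in L_\beta$ has length $n-j$ for some $0 \le j \le n$ (taking $v = \eps$ when $j=n$), so the count telescopes: $\sum_{j=0}^{n}\#(L_\beta \cap A_{U_\beta}^{n-j}) = \sum_{i=0}^{n}\#(L_\beta \cap A_{U_\beta}^{i}) = U_\beta(n)$, since the sum telescopes and the $i=0$ term contributes the empty word. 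I would likely present this telescoping argument as the main proof, as it avoids the heavy path-counting casework.

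The main obstacle is making the "number of length-exactly-$n$ representations is $U_\beta(n)-U_\beta(n-1)$" step rigorous and properly cited, or, if I go the automaton route instead, handling the recurrence bookkeeping for $n \ge m+k$ where the nontrivial correction terms appear — that is exactly the place where the minimality of $m,k$ and the precise shape of $d_\beta(1)$ enter, and it is easy to be off by a boundary term. Either way I would double-check the edge cases $n=0$ and $n < m+k$ by hand against \cref{ex:integer-phi-et-phi-sqr-num-sys} (e.g.\ for $\varphi$: $U_\varphi = 1,2,3,5,8,\dots$, and indeed $0^*L_\varphi$ has $1,2,3,5,\dots$ words of lengths $0,1,2,3,\dots$) to be sure the convention on the empty word is consistent throughout.
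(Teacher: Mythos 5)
Your telescoping argument --- decomposing each length-$n$ word of $0^*L_\beta$ uniquely as $0^j v$ with $v\in L_\beta$ of length $n-j$, using that exactly $U_\beta(i)-U_\beta(i-1)$ integers have a greedy representation of length $i\ge 1$ (plus the empty word for $i=0$), and summing so that the sum collapses to $U_\beta(n)$ --- is precisely the paper's proof, so the automaton path-counting route you sketch first is not needed. The proposal is correct and takes essentially the same approach as the paper.
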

\begin{proof}
    First, for all $m\geq 1$, the length-$m$ words in $L_\beta$ correspond to the integers in $[U_\beta(m-1),U_\beta(m))$, so there are $U_\beta(m)-U_{\beta}(m-1)$ of them.
    The number of length-$n$ words in $0^*L_\beta$ is thus equal to
    \[
    \left(\sum_{m=1}^{n-1} U_\beta(m)-U_{\beta}(m-1) \right) +1 
    =U_\beta(n).
    \]
    Therefore, the lemma is proved. \qed
\end{proof}

Since automata producing Parry--Bertrand automatic sequences recognize the language $L_{\beta}$, we build up one from that of~\cref{pro:auto}. 

\begin{lemma} \label{lem:automate Lb}
    Let $\beta \in \mathbb{R}_{>1}$ be a Parry number. Then there exists a deterministic finite automaton recognizing the language $L_{\beta}$. 
\end{lemma}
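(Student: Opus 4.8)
The plan is to start from the automaton $\mathcal{A}_\beta$ of~\cref{pro:auto}, which recognizes the $\beta$-shift $S_\beta$, and turn it into a deterministic finite automaton recognizing $L_\beta$. Recall that, by the Bertrand property discussed just before~\cref{lem:nbr words fixed len}, the set $0^*L_\beta$ of $U_\beta$-representations with arbitrarily many leading zeroes is exactly the set of finite labels of paths in $\mathcal{A}_\beta$ starting from the initial state $a_0$ (this is~\cite[Theorem~2]{CCS2021}). So the first step is to observe that $0^*L_\beta$ is recognized by $\mathcal{A}_\beta$ with all states accepting (it is already deterministic and complete on the reachable part); the only issue is to cut down from $0^*L_\beta$ to $L_\beta$, i.e.\ to forbid a leading zero on nonempty words while still accepting~$\eps$.

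Next I would implement this restriction by a standard automaton construction. Concretely, take a copy of $\mathcal{A}_\beta$, add a fresh initial state $q_{\mathrm{init}}$, and define transitions out of $q_{\mathrm{init}}$ as follows: on any letter $t \geq 1$ in $A_\beta$, go to $\delta(a_0,t)$ (the state reached in $\mathcal{A}_\beta$ from $a_0$ on $t$); on the letter $0$, go to a sink (non-accepting) state. Make $q_{\mathrm{init}}$ accepting (to get $\eps \in L_\beta$) and keep all the original states $a_0,\dots,a_{m+k-1}$ accepting. A word $w$ is then accepted iff either $w=\eps$, or $w = t w'$ with $t\geq 1$ and $tw'$ labels a path from $a_0$ in $\mathcal{A}_\beta$; by the Bertrand/greedy description this is exactly the set of words with no leading zero that lie in $0^*L_\beta$, namely $L_\beta$ together with $\eps$. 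Since by convention $\rep_{U_\beta}(0)=\eps$, this is precisely $L_\beta$, and the resulting automaton is deterministic by construction.

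The verification step is then just bookkeeping: check determinism and completeness (adding a sink state absorbs undefined transitions), and check the language equality $L(\mathcal{A}) = L_\beta$ using the characterization of $0^*L_\beta$ as path labels in $\mathcal{A}_\beta$ together with the fact that greedy $U_\beta$-representations never start with $0$ (except $\eps$). I do not expect a genuine obstacle here; the statement is essentially folklore and the content is entirely contained in the Bertrand property. The only mild subtlety worth spelling out is the treatment of the empty word and of leading zeroes, which is why the construction introduces the extra initial state rather than simply declaring all states of $\mathcal{A}_\beta$ final. One could alternatively phrase the argument more abstractly: $L_\beta = (0^*L_\beta) \cap (\{\eps\} \cup (A_\beta\setminus\{0\})A_\beta^*)$ is the intersection of two regular languages, hence regular, and regular languages are recognized by deterministic finite automata; but giving the explicit automaton is cleaner and will be convenient for the later counting arguments built on~\cref{lem:nbr words fixed len}.
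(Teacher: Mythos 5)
Your construction is essentially identical to the paper's: the authors also recognize $L_\beta$ by taking the automaton $\mathcal{A}_\beta$ of~\cref{pro:auto} (which accepts $0^*L_\beta$ with all states final), adding a fresh initial state $a_0'$ that copies the transitions of $a_0$ on non-zero letters only, and keeping all states accepting so that $\eps=\rep_{U_\beta}(0)$ is accepted. The only cosmetic difference is that you complete the automaton with an explicit non-accepting sink on the letter $0$, whereas the paper simply leaves that transition undefined.
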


\begin{proof}
    Let $\mathcal{A}_{\beta}=\left(Q,a_0,A_{\beta},\delta,Q\right)$ be the DFA recognizing the language $0^{*}L_{\beta}$ from~\cref{pro:auto} with state-set $Q=\{a_0,\ldots,a_{m+k-1}\}$.
    The automaton $\mathcal{A}_{\beta}'=(Q',a_0',A_{\beta},\delta',Q')$ defined as follows recognizes $L_{\beta}$.
    We set $Q'=\{a_0'\}\cup Q$ where $a_0'$ is a new state.
    We replace the initial state $a_0$ of $\mathcal{A}_{\beta}$ by $a_0'$.
    The new transition function $\delta'$ is defined as $\delta'(a_0',c)=\delta(a_0,c)$ for all non-zero letter $c$, and $\delta'(a_i,c)=\delta(a_i,c)$ for all $i\in\{0,\ldots,m+k-1\}$ and all letter $c$. 
    \qed
\end{proof}

It is well known due to Cobham that $b$-automatic sequences are characterized by $b$-uniform morphisms; see~\cite{Cobham72} or~\cite[Theorem~6.3.2]{AS03}.
We have the following generalization for $S$-automatic sequences.

\begin{theorem}[\cite{RM2002}]
\label{thm:morphic-iff-automatic}
A word is morphic if and only if it is $S$-automatic for some abstract numeration system $S$.
\end{theorem}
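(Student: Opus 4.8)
The plan is to prove \cref{thm:morphic-iff-automatic} in both directions, leaning on the two structural facts already assembled: the characterization of $S$-automatic sequences via DFAOs reading $S$-representations, and the ability (illustrated in \cref{lem:automate Lb}) to manipulate the numeration automaton. Since the paper cites \cite{RM2002} for this theorem, I expect the intended proof to be a sketch of the classical argument rather than a self-contained development; I will follow that route.

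For the \emph{only if} direction, suppose $\infw{x}=g(f^\omega(a))$ with $f\colon A^*\to A^*$ prolongable on $a$ and $g\colon A^*\to B^*$ a coding. The idea is to build an abstract numeration system whose $n$th word encodes a path in the "prefix tree" of the iteration of $f$ that locates the $n$th letter of $f^\omega(a)$. Concretely, one reads off which child one descends into at each level when expanding $a\mapsto f(a)\mapsto f^2(a)\mapsto\cdots$; the admissible sequences of choices form a regular language $L$ over an alphabet recording, at step $i$, the position of a letter inside the image $f(\cdot)$ of its parent. Ordering $L$ genealogically recovers the positions $0,1,2,\dots$ in $f^\omega(a)$ in order, so the map $n\mapsto(\text{letter of }f^\omega(a)\text{ at position }n)$ becomes computable by a DFAO that tracks, state by state, the current letter of $A$ reached along the path; composing with $g$ on the output gives the DFAO witnessing that $\infw{x}$ is $S$-automatic. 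The two points needing care are that $L$ is genuinely regular (it is, because the local constraint "position $j$ is valid only if the parent letter has image of length $>j$" is finite-state) and that the genealogical enumeration of $L$ matches the left-to-right enumeration of letters of $f^\omega(a)$ (this is where the choice of letter order on the encoding alphabet must be fixed so that "shorter path = earlier letter, then lexicographic" agrees with positions in the word).

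For the \emph{if} direction, suppose $\infw{x}$ is $S$-automatic for $S=(L,A,<)$ with DFAO $\mathcal{A}$, so $\infw{x}(n)=\mathcal{A}(\rep_S(n))$. Here the strategy is the standard "automaton-to-morphism" construction: take as the new alphabet the state set $Q$ of the minimal DFA for $L$ (read with the natural left-to-right processing), define a morphism $\mu\colon Q^*\to Q^*$ by $\mu(q)=\delta(q,c_1)\,\delta(q,c_2)\cdots$ listing the $<$-ordered successors of $q$ under the letters of $A$, and check that $\mu$ is prolongable on the initial state $q_0$. The word $\mu^\omega(q_0)$ then lists, in genealogical order of $L$, the states reached after reading each prefix that lies in $L$ — but one must be slightly careful, since not every word over $A$ lies in $L$; the clean fix (again in the spirit of \cref{lem:automate Lb}) is to work with a trimmed automaton and to intersperse a bookkeeping mechanism, or to invoke the fact that $L$ being regular lets us pass to an equivalent system in which every path is valid. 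Applying the output function of $\mathcal{A}$ as a coding on $\mu^\omega(q_0)$ — after projecting each state to the corresponding accepted/rejected data and output letter — yields exactly $\infw{x}$, so it is morphic.

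The main obstacle is reconciling the genealogical order on $L$ with the iteration order of the morphism when $L$ is an arbitrary regular language rather than a language of all words of bounded-length-increment (as in the purely positional case): one has to verify that a single morphism can be made to produce the states in precisely the order $\rep_S(0),\rep_S(1),\rep_S(2),\dots$, which is the technical heart of \cite{RM2002} and where the "abstract" (as opposed to positional) nature of the numeration system really bites. Since this is a cited result, I would present the two constructions above at the level of detail given for \cref{lem:automate Lb} and refer to \cite{RM2002} (and \cite[Chapter~3]{CANT10}) for the verification that the orders coincide.
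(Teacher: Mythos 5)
The paper does not actually prove this statement --- it cites \cite{RM2002} --- but it does develop in full the construction underlying the ``$S$-automatic $\Rightarrow$ morphic'' direction (the product automaton $\mathcal{P}=\mathcal{A}\times\mathcal{B}$, the morphism $\varphi_\mathcal{P}$ listing $<$-ordered successors, and the coding $\nu_\mathcal{P}$), and your sketch of both directions follows the same classical route, so in spirit you and the paper agree.

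There is, however, one concrete step in your ``if'' direction that fails as written: you propose to ``check that $\mu$ is prolongable on the initial state $q_0$.'' This is generally false --- $\mu(q_0)$ need not begin with $q_0$; in the paper's own \cref{ex:example non Parry but Bertrand} one has $\varphi_\mathcal{P}(A)=BBC$, which does not start with $A$ --- so $\mu^\omega(q_0)$ is simply not defined and your construction has no fixed point to apply the coding to. The standard repair, which the paper uses, is to adjoin a fresh symbol $\alpha$ with $\varphi_\mathcal{P}(\alpha)=\alpha\,(a_0,r_0)$ and $\nu_\mathcal{P}(\alpha)=\eps$, so that $\varphi_\mathcal{P}^\omega(\alpha)=\alpha\,(a_0,r_0)\,\varphi_\mathcal{P}((a_0,r_0))\,\varphi_\mathcal{P}^2((a_0,r_0))\cdots$ exists and its shift enumerates the states reached by the words of $L$ in genealogical order (this is \cite[Lemma~2.25]{Rigo14-2}). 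A second, smaller imprecision: the coding must read off the output of the DFAO $\mathcal{B}$, so the morphism has to live on the states of the \emph{product} of the DFA for $L$ with $\mathcal{B}$, not on the states of the minimal DFA for $L$ alone; your parenthetical about ``projecting each state to the corresponding accepted/rejected data and output letter'' suggests you intend this, but it should be made explicit. With these two adjustments your argument coincides with the construction the paper develops, and deferring the order-matching verification to \cite{RM2002} is exactly what the paper itself does.
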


The proof of the above theorem is constructive: given the morphisms producing the word $\infw{s}$, one can build an abstract numeration system $S$ and a DFAO generating~$\infw{s}$, and vice versa.
As our proof of~\cref{thm:main} relies on this construction, we develop it now.

Fix a Parry number $\beta \in \mathbb{R}_{>1}$ and consider the canonical Parry--Bertrand numeration $U_\beta$.
For the sake of conciseness, let $L$ be the numeration language $L_\beta$ and let $A$ denote the (minimal) alphabet over which $L$ is defined.
Order the letters of $A$ as $0<1<\cdots < m$.
Consider also a binary $U_\beta$-automatic sequence $\infw{s}$.
Let $\mathcal{A}= (Q,a_0,A,\delta_\mathcal{A},F)$ be the DFA accepting $L$ as in~\cref{lem:automate Lb} and let $\mathcal{B} = (R,r_0,A,\delta_\mathcal{B},\tau\colon R\to \{0,1\})$ be a DFAO generating the sequence $\infw{s}$.

We explicitly give a pair of morphisms producing $\infw{s}$.
Define the Cartesian product automaton $\mathcal{P}=\mathcal{A}\times\mathcal{B}$, which imitates the behavior of $\mathcal{A}$ on the first component and $\mathcal{B}$ on the second.
Its set of states is $Q\times R$, the initial state is $(a_0,r_0)$, and the alphabet is $A$.
Its transition function $\Delta \colon (Q\times R)\times A^* \to Q\times R$ is defined by $\Delta((q,r),w)=(\delta_\mathcal{A}(q,w),\delta_\mathcal{B}(r,w))$ for all $q\in Q$, $r\in R$ and $w\in A^*$.
In particular, $\Delta((q,r),w)\in F\times R$ if and only if $w$ belongs to $L$.
Furthermore, if $\Delta((a_0,r_0),\rep_{U_{\beta}}(n))=(q,r)$, then $\infw{s}(n) = \tau(r)$.
Now let $\alpha$ be a symbol not belonging to $Q \times R$.
Define the morphism $\varphi_\mathcal{P} \colon (Q\times R \cup \{\alpha\})^* \to (Q\times R \cup \{\alpha\})^*$ by $\varphi_\mathcal{P}(\alpha) = \alpha (a_0,r_0)$ and, for all $q\in Q$ and $r\in R$, 
    \[
    \varphi_\mathcal{P}((q,r))=\Delta((q,r),0)\cdots \Delta((q,r),m) = \prod_{i=0}^m \Delta((q,r),i), 
    \]
where, if $\Delta((q,r),i)$ is not defined for some $i$, then it is replaced by $\eps$ (note that the product has to be understood as the concatenation).
Observe that $\varphi_\mathcal{P}$ is not necessarily uniform.
Order the words in $L$ genealogically as $w_0 < w_1 < \cdots$.
Define $\infw{u}_\mathcal{P}=\varphi_\mathcal{P}^\omega(\alpha)$.
Then, by~\cite[Lemma~2.25]{Rigo14-2}, the shifted sequence of $\infw{u}_\mathcal{P}$ is the sequence of states reached in $\mathcal{P}$ by the words in $L$ ordered genealogically, i.e., for all $n\in\N$, the $(n+1)$st letter of $\infw{u}_\mathcal{P}$ is equal to $\Delta((a_0,r_0),w_n)$.
Now define the morphism $\nu_\mathcal{P} \colon (Q\times R \cup \{\alpha\})^* \to \{0,1\}^*$ by $\nu_\mathcal{P}(\alpha)=\eps$ and, for all $q\in Q$ and $r\in R$, $\nu_\mathcal{P}((q,r))=\tau(r)$. Note that $\nu_\mathcal{P}$ is non-erasing on $Q\times R$.
By construction, we have $\infw{s}=\nu_\mathcal{P}(\infw{u}_\mathcal{P})=\nu_\mathcal{P}(\varphi_\mathcal{P}^\omega(\alpha))$.
To help the reader with the construction, we illustrate it in~\cref{ex:example non Parry but Bertrand} below.

We now give the proof of~\cref{thm:main}. In particular, one side feature of the proof is a characterization of the distance between the repeating part in the sequence under study.

\begin{proof}[Proof of~\cref{thm:main}]
Let $L$ denote the numeration language $L_\beta$ and let $A$ be the (minimal) alphabet over which $L$ is defined.
Let $\mathcal{A}= (Q,a_0,A,\delta_\mathcal{A},F)$ be the DFA accepting $L$ as in~\cref{lem:automate Lb}, $\mathcal{B} = (R,r_0,A,\delta_\mathcal{B},\tau\colon R\to \{0,1\})$ be a DFAO generating the sequence $\infw{s}$, $\mathcal{P}=\mathcal{A}\times\mathcal{B}$ be the Cartesian product automaton, with transition function $\Delta \colon (Q\times R)\times A^* \to Q\times R$, and $\alpha$ be a symbol not belonging to $Q \times R$.
Consider the morphisms $\varphi_\mathcal{P} \colon (Q\times R \cup \{\alpha\})^* \to (Q\times R \cup \{\alpha\})^*$ and $\nu_\mathcal{P} \colon (Q\times R \cup \{\alpha\})^* \to \{0,1\}^*$ as defined above and write $\infw{u}_\mathcal{P}=\varphi_\mathcal{P}^\omega(\alpha)$.
    
Let $N,M\geq 1$ where $M$ is chosen later accordingly to $N$.
Consider two distinct words $u,v$ such that $\Delta((a_0,r_0),u)=\Delta((a_0,r_0),v)$. Then, for all length-$M$ words $w\in 0^*L$, the states $\Delta((a_0,r_0),uw)$ and $\Delta((a_0,r_0),vw)$ are equal.
The word $\varphi_\mathcal{P}^{M}(\Delta((a_0,r_0),u))=\varphi_\mathcal{P}^{M}(\Delta((a_0,r_0),v))$ is made of the sequence of such states.
Indeed, for $x\in \{u,v\}$, we have
\[
\varphi_\mathcal{P}^{M}(\Delta((a_0,r_0),x))
=\prod_{\substack{w\in 0^*L\\ |w|=M}}\Delta((a_0,r_0),xw).
\]
Applying the morphism $\nu_{\mathcal{P}}$ then gives back the original sequence $\infw{s}$.
As $\nu_{\mathcal{P}}$ is non-erasing on $Q\times R$, $|\varphi_\mathcal{P}^{M}(\Delta((a_0,r_0),x))|=U_\beta(M)$ by~\cref{lem:nbr words fixed len}.

\noindent \textbf{Order $2$.} Firstly, we study the case of the order-$2$ correlation, for the sake of simplicity and since the case of even-order correlations will follow the same lines. 
Consider two distinct words $u,v\in L$ such that $\Delta((a_0,r_0),u)=\Delta((a_0,r_0),v)$. 
By the pigeonhole principle, we may choose $u,v$ among the first $|Q|\cdot|R|$ words of $L$.
Without loss of generality, we may assume that $u$ is lexicographically smaller than $v$ and that $u$ is non-empty (recall~\cref{lem:automate Lb}).
Then in the sequence $\infw{s}$ we have two identical blocks of length $U_{\beta}(M)$ starting at positions $p_u,p_v$ respectively associated with the word $u0^M$ and $v0^M$ in $L$, i.e., $p_x=\val_{U_\beta}(x0^M)$ for $x\in\{u,v\}$.
The situation is depicted in~\cref{fig:proof of main thm}.
Therefore, we have 
\begin{align}\label{eq: equality order 2}
 \infw{s}(p_u + n)=\infw{s}(p_v + n)   
\end{align}
for all  $0\leq n <U_{\beta}(M)$.
We now make several observations.
First, notice that the largest index in~\cref{eq: equality order 2} is smaller than $p_v+U_{\beta}(M)$ by assumption on $u,v$.
    Then, note that, for $x\in\{u,v\}$, we have $p_x=\val_{U_\beta}(x0^M)$ and $\val_{U_\beta}(x) \le |Q|\cdot|R|$ by choice of $x$.

Let $N\geq 1$ be sufficiently large. Choose $M\geq 1$ such that $p_v+U_{\beta}(M)\leq N< p_v+U_{\beta}(M+1)$ and set $D=(p_u,p_v)$. Therefore, using~\cref{eq: equality order 2}, we have
\begin{equation}
    C_2(\infw{s},N)\geq \left| V(\infw{s},N,U_{\beta}(M),D) \right|
    =\sum_{n=0}^{U_{\beta}(M)-1} (-1)^{\infw{s}(p_u + n)+\infw{s}(p_v + n)}
    =U_{\beta}(M).  \label{eq:final_order_2}
\end{equation}
On the other hand, we have $U_{\beta}(M)=\Theta(\beta^M)$ by~\cref{eq:conv-beta-U}, which yields
\begin{equation}\label{eq:bound on pv}
    p_v+U_{\beta}(M+1)
\le \lceil\beta\rceil^{M+1} \cdot |Q|\cdot|R| + U_{\beta}(M+1)
\ll c \beta^{M+1}
\end{equation}
for some non-zero constant $c$ depending on $\beta$ and the sequence $\infw{s}$.
Then~\cref{eq:final_order_2,eq:bound on pv} lead to \begin{align*}
    C_2(\infw{s},N) \geq U_{\beta}(M) \gg \beta^{M} \gg \frac{p_v+U_{\beta}(M+1)}{c\beta}\gg N 
\end{align*}
by choice of $M$.
This concludes the case of the order-$2$ correlation. 

\begin{figure}
    \centering
    \begin{tikzpicture}[cross/.style={path picture={ 
    \draw[black]
    (path picture bounding box.south east) -- (path picture bounding box.north west) (path picture bounding box.south west) -- (path picture bounding box.north east);
    }}]  
        \node (a) at (-0.5,0) {$\infw{u}_\mathcal{P}$};
        \draw [-] (0,0.1) -- (0,-0.1);
        \draw [-] (0,0) -- (7,0);
        \draw [dotted] (7,0) to (7.5,0);
        \node [cross] (1) at (1,0) {};
        \node (a) at (1,0.3) {$u$};
        \node [cross] (2) at (2,0) {};
        \node (a) at (2,0.3) {$v$};

        \draw [->] (-0.75,0) to [out=180,in=130] (-0.75,-1);
        \node (e) at (-1.25,-0.5) {$\varphi_\mathcal{P}^M$};

        \draw [dotted] (1) -- (1.25,-0.85);
        \draw [dotted] (1) -- (2.75,-0.85);
        \draw [dotted] (2) -- (4,-0.85);
        \draw [dotted] (2) -- (5.5,-0.85);

        \node (b) at (-0.5,-1) {$\infw{u}_\mathcal{P}$};
        \draw [-] (0,-0.9) -- (0,-1.1);
        \draw [-] (0,-1) -- (7,-1);
        \draw [dotted] (7,-1) to (7.5,-1);
        \draw [draw=black] (1.25,-1.15) rectangle (2.75,-0.85);
        \draw [draw=black] (4,-1.15) rectangle (5.5,-0.85);

        \node (c) at (-0.5,-2) {$\infw{s}$};
        \draw [-] (0.25,-1.9) -- (0.25,-2.1);
        \draw [-] (0.25,-2) -- (7,-2);
        \draw [dotted] (7,-2) to (7.5,-2);
        \draw [draw=black] (1.25,-2.15) rectangle (2.75,-1.85);
        \draw [draw=black] (4,-2.15) rectangle (5.5,-1.85);
        
        \draw [->] (-0.75,-1) to [out=220,in=180] (-0.75,-2);
        \node (d) at (-1.25,-1.5) {$\nu_\mathcal{P}$};

        \draw [dotted] (1.25,-1.15) -- (1.25,-1.85);
        \draw [dotted] (2.75,-1.15) -- (2.75,-1.85);
        \draw [dotted] (4,-1.15) -- (4,-1.85);
        \draw [dotted] (5.5,-1.15) -- (5.5,-1.85);
        \node (g1) at (1.25,-2.15) [below] {$p_u$};
        \node (g1) at (4,-2.15) [below] {$p_v$};
        
        \draw [<-] (1.25,-1.5) to (1.5,-1.5);
        \draw [->] (2.5,-1.5) to (2.75,-1.5);
        \node (f) at (2,-1.5) {$U_\beta(M)$};

        \draw [<-] (4,-1.5) to (4.25,-1.5);
        \draw [->] (5.25,-1.5) to (5.5,-1.5);
        \node (f) at (4.75,-1.5) {$U_\beta(M)$};
    \end{tikzpicture}
    \caption{A representation of the situation depicted in the proof of~\cref{thm:main}.}
    \label{fig:proof of main thm}
    \end{figure}
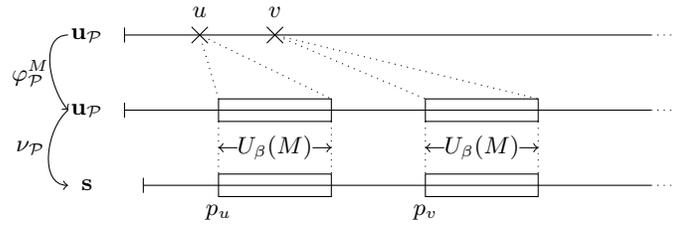

\noindent \textbf{Even orders.} The general case of even-order correlations can be treated similarly. 
Indeed, let $k\geq 1$.
Consider $2k$ pairwise distinct words $u_1,\ldots,u_{2k}\in L$ such that $\Delta((a_0,r_0),u_i)=\Delta((a_0,r_0),u_j)$ for all $i,j \in\{1,\ldots,2k\}$. 
By the pigeonhole principle, we may choose them among the first $(2k-1) \cdot |Q|\cdot|R|$ words of $L$.
Without loss of generality, we may assume that they are increasingly nested lexicographically speaking (and non-empty).
Then in the sequence $\infw{s}$ we have $2k$ identical blocks of length $U_{\beta}(M)$ starting at positions $p_{u_i}=\val_{U_\beta}(u_i0^M)$ for $i\in\{1,\ldots,2k\}$.
Therefore, we have $\infw{s}(p_{u_i} + n)=\infw{s}(p_{u_j} + n)$ for all $1\leq i\leq j \leq 2k$ and $0\leq n <U_{\beta}(M)$.
We now finish up the proof by following the same argument as in the previous case.
\qed
\end{proof}

\section{Discussion and future work}\label{sec:conclusion}

We can actually obtain a lower bound on correlations using the factor complexity, i.e., the map counting the number of different factors of each length within a given sequence.
Consider a sequence $\infw{s}$ that is not ultimately periodic.
Let $p_{\infw{s}} \colon \mathbb{N} \to \mathbb{N}$ denote its factor complexity.
Let $k,N$ be integers with $k\ge 1$.
Then, using Morse and Hedlund's famous result~\cite{MorseHedlund1938}, one can show that, within the prefix of length $k p_{\infw{s}}(N)$ of $\infw{s}$, there exists a length-$N$ factor that occurs at least $k$ times.
Therefore, we obtain the lower bound $C_{2k}(\infw{s},2k p_{\infw{s}}(N)) \geq N$.
In particular, for Parry--Bertrand automatic sequences,~\cite[Theorem~3.4]{MPR2019} allows to obtain the behavior emphasized in~\cref{thm:main}.  

However, the aforementioned lower bound does not reflect the expected behavior of $C_k$ when the studied sequence is random, or in some other simple cases.
For instance, for the binary Champernowne sequence $\infw{ch}=0 1 10 11 100 101 110 111 \cdots$, we have $C_2(\infw{ch},N)>\frac{1}{48}N$ (see \cite[Theorem~1]{MS-II}), whereas $p_{\infw{ch}}(N)=2^N$ for all sufficiently large $N$. 

Furthermore, ~\cref{thm:main} extends to numeration systems that are not Parry-Bertrand.
In fact, scrutinizing its proof, we need a good control on the proportion of letters erased under the morphism $\nu_{\mathcal{P}}$.
Indeed, the proportion of non-erased letters is precisely the length of the block that is represented in~\cref{fig:proof of main thm}. 
So, in turn, a good knowledge on the growth rate of the numeration numeration language, i.e., the number of words of each length within the language, is required.
In the next example, using the method presented in the proof of~\cref{thm:main}, we exhibit an automatic sequence having a quadratic factor complexity but a linear order-$2$ correlation.

\begin{example}\label{ex:example non Parry but Bertrand}
Consider the linear positional numeration system $U=(U(n))_{n\ge 0}$ defined by $U(0)=1$ and $U(n+1)=3U(n)+1$ for all $n\ge 0$. It has several useful properties: its numeration language is given by the DFA in~\cref{fig:product ex} and $U$ is Bertrand but not Parry; see~\cite[Example~2 and Lemma~2.5]{MPR2019}, as well as~\cite[page~131]{Hollander1998}.
We also have $U(n) =\Theta(\beta^n)$ where $\beta = \frac{1}{2} (3+\sqrt{13}) \approx 3.30278$.
Consider the fixed point $\infw{x}$ of the morphism $a\mapsto aaab$, $b\mapsto b$ and the coding $\tau \colon a\mapsto 0, b\mapsto 1$. The sequence $\tau(\infw{x})$ is produced by the DFAO in~\cref{fig:product ex}. It is binary and $U$-automatic.
In addition, its factor complexity is known to be quadratic (see~\cite[Theorem~3.3]{MPR2019}).

\begin{figure}
\centering
\begin{tikzpicture}
\tikzstyle{every node}=[shape=circle,fill=none,draw=black,minimum size=20pt,inner sep=2pt]
\node(a) at (1,-2) {$a/0$};
\node(b) at (1,-4) {$b/1$};

\tikzstyle{every node}=[shape=rectangle,fill=none,draw=none]
\node(a0) at (1,-1) {};

\tikzstyle{every path}=[color =black, line width = 0.5 pt]
\draw [->] (a0) to [] node [] {}  (a);

\draw [->] (a) to [loop left] node [] {$0,1,2$}  (a);
\draw [->] (a) to [] node [right] {$3$}  (b);

\draw [->] (b) to [loop left] node [] {$0$}  (b);
;

\tikzstyle{every node}=[shape=circle,fill=none,draw=black,minimum size=20pt,inner sep=2pt]
\node[accepting](a0') at (3,0) {$a_0'$};
\node[accepting](a0) at (5.5,0) {$a_0$};
\node[accepting](a1) at (8,0) {$a_1$};

\tikzstyle{every node}=[shape=rectangle,fill=none,draw=none]
\node(a00) at (2,0) {};

\tikzstyle{every path}=[color =black, line width = 0.5 pt]
\draw [->] (a00) to [] node [] {}  (a0');

\draw [->] (a0') to [] node [above] {$1,2$}  (a0);
\draw [->] (a0') to [bend right] node [below] {$3$}  (a1);

\draw [->] (a0) to [loop above] node [] {$0,1,2$}  (a0);
\draw [->] (a0) to [] node [above] {$3$}  (a1);

\draw [->] (a1) to [loop above] node [above] {$0$}  (a1);
;

\node (f0) at (2,-2) [] {};
\node (A) at (3,-2) [draw,circle] {$A/0$};
\node (B) at (5.5,-2) [draw,circle] {$B/0$};
\node (C) at (8,-4) [draw,circle] {$C/1$};

\draw [->] (f0) to [] node [] {}  (A);

\draw [->] (A) to [] node [above] {$1,2$}  (B);
\draw [->] (A) to [] node [below] {$3$}  (C);

\draw [->] (B) to [loop right] node [] {$0,1,2$}  (B);
\draw [->] (B) to [] node [above] {$3$}  (C);

\draw [->] (C) to [loop above] node [] {$0$}  (C);

\end{tikzpicture}
\caption{The product $\mathcal{P}$ between a DFA for $L_{U}$ (top) and a DFAO for the morphic sequence $\tau(\infw{x})$ (right) where $U$, $\infw{x}$, and $\tau$ are all defined in~\cref{ex:example non Parry but Bertrand}.}
\label{fig:product ex}
\end{figure}
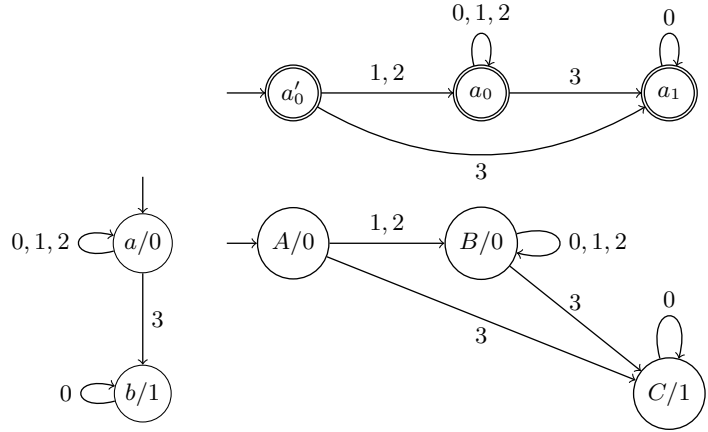

Using the notation of~\cref{sec:proof of main}, the morphism $\varphi_\mathcal{P}$ is defined by $\alpha \mapsto \alpha A$, $A\mapsto BBC$, $B\mapsto BBBC$, $C\mapsto C$.
Note that it is not uniform.
We get $\varphi_\mathcal{P}^\omega(\alpha) = \alpha A BBC BBBC BBBC C BBBC \cdots$, whose shift indeed corresponds to the sequence $A,B,B,C,B,B,B,C,\ldots$ of states reached by the words of $L_{U}=\{\eps, 1, 2, 3, 10,11,12,13,20,21,22,23,30,100,101,102,\ldots\}$ in the product automaton $\mathcal{P}$ of~\cref{fig:product ex}.
The morphism $\nu_\mathcal{P}$ is defined by $\alpha \mapsto \eps$, $A,B\mapsto 0$, and $C\mapsto 1$.
Applying $\nu_\mathcal{P}$ to $\varphi_\mathcal{P}^\omega(\alpha)$ gives the sequence $\tau(\infw{x})$.
To illustrate the method of the proof of~\cref{thm:main}, we can consider $u=10$ and $v=100$ for which $\delta(A,u)=B=\delta(A,v)$.
Now we also conclude that $ C_2(\tau(\infw{x}),N) \gg N$.
\end{example}

As a consequence, we raise the following more general and natural question: Is any morphic sequence not pseudorandom as far as the correlation measures is concerned, i.e., is the correlation of any morphic sequence large or small?
In the near future, we hope to revisit the method of the proof of~\cref{thm:main} to extend it to a larger class of morphic sequences.
In particular, as a first step, we will need to find the appropriate conditions to describe this particular class.
On the other hand, as shown in~\cite{MW18}, a lower bound on the state complexity of binary sequences in terms of the order-$2$ correlation measure can be straightforwardly derived from~\cref{thm:AS_MW}.
In our setting, can a refinement of the proof of~\cref{thm:main}, involving only the number of states of some automata, lead to similar bounds?


\subsection*{Acknowledgments}
We thank the reviewers for providing useful remarks that improved our initial work, especially for thinking of the alternative proof mentioned at the beginning of~\cref{sec:conclusion}, which encourages us to look for extensions of our~\cref{thm:main} to a broader family of sequences.
We also thank France Gheeraert and Michel Rigo for insightful discussions.

Pierre Popoli's research is supported by ULiège's Special Funds for Research, IPD-STEMA Program.
Manon Stipulanti is an FNRS Research Associate supported by the Research grant 1.C.104.24F.

\newpage


%
%
%

\bibliographystyle{splncs04}
\bibliography{biblio.bib}

\end{document}